\newtheorem*{theorem*}{Theorem}
\newtheorem{theorem}{Theorem}[section]
\newtheorem{corollary}[theorem]{Corollary}
\newtheorem{lemma}[theorem]{Lemma}
\newtheorem{remark}[theorem]{Remark}
\newtheorem{conjecture}[theorem]{Conjecture}
\theoremstyle{definition}
\newtheorem{definition}[theorem]{Definition}
\newtheorem{example}[theorem]{Example}
\DeclareMathOperator{\des}{\mathrm{des}}
\DeclareMathOperator{\maxwt}{\mathrm{maxwt}}
\title{On Permutation Weights and $\MakeLowercase{q}$-Eulerian Polynomials}
\author{Aman Agrawal} \address{Department of Physics, Indian Institute of Science, Bengaluru, Karnataka 560012, India} \email{amanagrawal@iisc.ac.in}
\author{Caroline Choi} \address{Department of Mathematics, Stanford University, Stanford, CA 94305} \email{cchoi1@stanford.edu}
\author{Nathan Sun} \address{Department of Mathematics, Harvard University, Cambridge, MA 02138} \email{nsun@college.harvard.edu}
\date{June 6, 2020}
\subjclass[2010]{Primary: 05A05; Secondary: 05A15, 05A30}
\keywords{$q$-Eulerian polynomials, Eulerian polynomials, permutations.}
\begin{document}
\maketitle

\begin{abstract}
Weights of permutations were originally introduced by
Dugan, Glennon, Gunnells, and Steingr\'\i msson \cite{dugan2019tiered} in their study of the combinatorics of tiered trees. Given a permutation $\sigma$ viewed as a sequence of integers, computing the weight of $\sigma$ involves recursively counting descents of certain subpermutations of $\sigma$. Using this weight function, one can define a $q$-analog $E_n(x,q)$ of the Eulerian polynomials. We prove two main results regarding weights of permutations and the polynomials $E_n(x,q)$. First, we show that the coefficients of $E_n(x, q)$ stabilize as $n$ goes to infinity, which was conjectured by Dugan, Glennon, Gunnells, and Steingr\'\i msson \cite{dugan2019tiered}, and enables the definition of the formal power series $W_d(t)$, which has interesting combinatorial properties. Second, we derive a recurrence relation for $E_n(x, q)$, similar to the known recurrence for the classical Eulerian polynomials $A_n(x)$. Finally, we give a recursive formula for the numbers of certain integer partitions and, from this, conjecture a recursive formula for the stabilized coefficients mentioned above.
\end{abstract}

\section{Introduction}

Dugan, Glennon, Gunnells, and Steingr\'\i msson  defined certain weights of permutations in their work on the combinatorics of tiered trees \cite{dugan2019tiered}. Tiered trees are a generalization of maxmin trees, which were originally introduced by Postnikov \cite{postnikov1997intransitive} and appear in the study of local binary search trees \cite{forge2014linial}, hypergeometric functions \cite{gelfand1997combinatorics}, and hyperplane arrangements \cite{postnikov2000deformations, stanley1996hyperplane}. In \cite{dugan2019tiered}, Dugan, Glennon, Gunnells, and Steingr\'\i msson defined weight for tiered trees, motivated by their role in the enumerations of absolutely irreducible representations of certain quivers and torus orbits on certain homogeneous varieties \cite{gunnells2018torus}. They showed that weight $0$ maxmin trees are in natural bijection with permutations. They defined the weight of a permutation $\sigma$ as the largest weight of a maxmin tree that can be constructed from $\sigma$. Computing the weight of $\sigma$ involves recursively counting descents of certain subpermutations of $\sigma$.

Using the weight of a permutation, a new $q$-analog of the Eulerian polynomials $E_n(x,q)$ was defined in \cite{dugan2019tiered}. The $q$-Eulerian polynomial $E_n(x,q)$ differs from other $q$-Eulerian polynomials by Carlitz \cite{carlitz1954}, Foata-Sch{\"u}tzenberger \cite{foata1978major}, Stanley \cite{stanley1976binomial}, and Shareshian-Wachs \cite{shareshian2007}, and presents interesting combinatorial properties. It was observed in \cite{dugan2019tiered} that the coefficients of $E_n(x,q)$ seemed to exhibit a certain $\textit{stabilization phenomenon}$. From this conjecture, the formal power series $W_d(t)$ was extracted from $E_n(x,q)$ and observed to have a connection with enumerations of a certain type of partition $T(n,k),$ see sequence A256193 in \cite{sloane2003line}.

We present two main results regarding permutation weights and the $q$-Eulerian polynomials $E_n(x,q)$. We prove the stabilization phenomenon conjectured in \cite{dugan2019tiered}, which gives an explicit formula for the formal power series $W_d(t)$ defined in \cite{dugan2019tiered}. We derive a recurrence relation for the $q$-Eulerian polynomials $E_n(x,q)$, similar to the known recurrence for the classical Eulerian polynomials $A_n(x)$.

This paper is organized as follows. In Section \ref{sec:preliminaries}, we introduce preliminary definitions and notation. In Section \ref{sec:stabilization}, we define the weight disparity of a permutation and find a lower bound for weight disparity. We then use this result to show that the coefficients of $x^d$ in the formal power series $W_d(t)$ do indeed stabilize. In Section \ref{sec: formula}, we derive a recurrence relation for $E_n(x,q)$. In Section \ref{sec:second proof}, we determine the conditions for which coefficients of $E_n(x,q)$ are stabilized and give a second proof of the stabilization phenomenon. We conclude with a conjecture regarding the coefficients of $W_d(t)$ in Section \ref{sec: OEIS}.

\section{Preliminaries} \label{sec:preliminaries}
Let $S_n$ denote the set of permutations of $[n] = \{1,2, \dots, n\}$. We consider the permutation $\sigma \in S_n$ as an ordered sequence of integers $a_1 a_2 \dots a_n$. We say that $\sigma \in S_n$ is a permutation of length $n$. We say that $i \in [n-1]$ is a descent of $\sigma$ if $a_i > a_{i+1}$, and we write $\des(\sigma)$ to denote the number of descents in $\sigma$. The definitions in this section follow from \cite{dugan2019tiered}.

\begin{definition}\label{def:2.1}
Given a permutation $\sigma = a_{1} a_{2} \cdot\cdot\cdot a_{n}$, we define a method for splitting $\sigma$ into certain subpermutations $\sigma_{1}, \sigma_{2}, \dots, \sigma_{j}$.

\begin{enumerate}[label=(\roman*)]
\item
Find the minimum element of $\sigma$, call it $a_{m}$. Divide $\sigma$ into subpermutations $\sigma_{\ell}$, $a_{m}$, and $\sigma_{r}$, where $\sigma_{\ell} = a_{1} a_{2} \cdot \cdot \cdot a_{m-1}$ and $\sigma_{r} = a_{m+1} a_{m+2} \cdot\cdot\cdot a_n$. We have $\sigma = \sigma_{\ell}$ $\cdot$ $a_{m}$ $\cdot$ $\sigma_{r}$.

\item
Find the largest element of $\sigma_{\ell}$, call it $a_{k}$. Let $\sigma_{\ell_{1}} = a_{1} a_{2} \cdot\cdot\cdot a_{k}$ and let $\sigma_{\ell_{2}} = a_{k+1} a_{k+2} \cdot\cdot\cdot a_{m-1}$. We have $\sigma = \sigma_{\ell_{1}}$ $\cdot$ $\sigma_{\ell_{2}}$ $\cdot$ $a_{m}$ $\cdot$ $\sigma_{r}$.

\item
Repeat step (ii) for $\sigma_{\ell_{2}}$ until $\sigma_{\ell}$ cannot be divided further. This process results in a collection of subpermutations $\sigma_{\ell_{1}}, \sigma_{\ell_{2}}, \dots, \sigma_{\ell _{s}}, \sigma_{r}$.
\end{enumerate}
\end{definition}

\begin{example} \label{ex:2.2}
We split the permutation $\sigma = 839562147$.

By step (i), we have $839562 \cdot 1 \cdot 47$.

By step (ii), we have $839 \cdot 562 \cdot 1 \cdot 47$.

By step (iii), we have $839 \cdot 56 \cdot 2 \cdot 1 \cdot 47$.

Since $\sigma$ cannot be split further, we are done. 
\end{example}

\begin{definition}\label{def:2.3}
Let $\sigma \in S_{n}$. Then the $\textit{weight}$ of $\sigma$ is the function $w: S_{n} \rightarrow \mathbb{Z}^{\geq 0}$ defined recursively as follows:

\begin{enumerate}[label=(\roman*)]
\item
If $\sigma$ is the identity permutation or $n=1$, then $w(\sigma)=0$. 

\item
Otherwise, consider $\sigma$ as a permutation of $S_{n+1}$ by appending $n+1$ to the right of $\sigma$. Let $\sigma_{1}, \sigma_{2}, \dots, \sigma_{j}$ be the subpermutations of this new permutation as in Definition \ref{def:2.1}. Then the weight of $\sigma$ is defined as $$w(\sigma) = \sum_{i=1}^{j} (\des(\sigma_{i}) + w(\sigma_{i})).$$
\end{enumerate}
\end{definition}

\begin{example}\label{ex:2.4}
Consider the permutation $\sigma = 781659243 \in S_9$. We complete $\sigma$ to $781659243A$ by adding a maximal element A to the end. After splitting, we have $78 \cdot 1 \cdot 659243A$. The weight of $\sigma$ is given by $$w(\sigma) = (w(78) + \des(78)) + (w(1) + \des(1)) + (w(659243A) + \des(659243A)).$$
We compute $w(\sigma)$ as follows:
\begin{itemize}
    \item Since the permutation $1$ is of length 1, we have $w(1) = 0$ and $\des(1)=0$.
    \item It is clear that that $w(78)=0$ and $\des(78)=0$.
    \item If we consider the numbers in the permutation $659243A$ relative to each other, we can see that $659243A$ flattens to the permutation $5461327$. We complete the permutation to $54613278$ and split to get $546 \cdot 1 \cdot 3278$. So the weight of $659243A$ is given by $w(659243A) = (w(546) + \des(546)) + (w(1) + \des(1)) + (w(3278)+\des(3278))$. A quick computation shows that $w(546)=0$ and $w(3278)=0$, so we have $w(65392) = (0+1)+(0+0)+(0+1)=2$.
\end{itemize}
Thus $w(\sigma) = (0+0) + (0+0) + (2+3) = 5$.
\end{example}

\begin{definition}\label{def:2.5}
The $\textit{Eulerian polynomial}$ $A_{n}(x)$ is defined as $$A_{n}(x) = \sum_{\sigma \in S_{n}} x^{\des(\sigma)}.$$ The coefficient of $x^m$ in $A_{n}(x)$ count the permutations in $S_n$ with $m$ descents and are the $\textit{Eulerian numbers}$, denoted $A(n,m)$. 
\end{definition}

Using the permutation statistic of weight, a new generalization of the Eulerian polynomials $E_n(x,q)$ was defined in \cite{dugan2019tiered}.

\begin{definition}\label{def:2.6}
The $\textit{\textit{q}-Eulerian polynomial}$ $E_n(x,q)$ is defined as $$E_{n}(x,q) = \sum_{\sigma \in S_{n}} x^{\des(\sigma)}q^{w(\sigma)}.$$
\end{definition}

\begin{example}\label{ex:2.7}
We list the $q$-Eulerian polynomials $E_n(x,q)$ up to $n=7$: 

$E_{0}(x,q) = 1$ \\

$E_{1}(x,q) = 1$ \\

$E_{2}(x,q) = 1+x$ \\

$E_{3}(x,q) = 1+x(q+3) + x^2$ \\

$E_{4}(x,q) = 1+x(q^2+3q+7) + x^2(q^2 + 4q+6) + x^3$ \\

$E_{5}(x,q) = 1+x(q^3+3q^2+7q+15) + x^2(q^4 + 4q^3+11q^2 + 25q+25) + x^3(q^3 + 5q^2 + 10q + 10) + x^4$ \\

$E_{6}(x,q) = 1+x(q^4+3q^3+7q^2+15q+31) + x^2(q^6+4q^5+11q^4+31q^3+58q^2 + 107q+90) + x^3(q^6+5q^5+16q^4+34q^3 + 76q^2 + 105q + 65) + x^4(q^4+6q^3+15q^2+20q+15)+x^5$ \\

$E_{7}(x,q) = 1+x(q^5 +3q^4+7q^3+15q^2+31q+63) + x^2(q^8+4q^7+11q^6+31q^5+65q^4+149q^3+237q^2 + 392q+301) + x^3(q^9 + 5q^8+16q^7+41q^6+104q^5+203q^4+380q^3 + 609q^2 + 707q + 350) + x^4(q^8+6q^7+22q^6+55q^5+106q^4+210q^3+336q^2+315q+140)+x^5(q^5+7q^4+21q^3+35q^2+35q+21) + x^6$
\end{example}

\section{Proof of Stabilization Phenomenon}
\label{sec:stabilization}

Let us fix $k$ and consider the coefficients of $x^k$ in $E_{n}(x,q)$. It was conjectured in \cite{dugan2019tiered} that as $n$ goes to infinity, these coefficients converge to a fixed sequence and thus display a $\textit{stabilization phenomenon}$. Observe that for $k=2$ in Example \ref{ex:2.7}, the coefficients of $x^2$ seem to stabilize to
\begin{center}
$q^N+4q^{N-1}+11q^{N-2}+31q^{N-3}+65q^{N-4}+ \cdots$
\end{center}
where $N$ is the maximum weight of a permutation of length $n$ with $2$ descents. Using this conjectural observation, the formal power series $W_{d}(t) \in \mathbb{Z}[[t]]$ was extracted in \cite{dugan2019tiered} and observed to have interesting combinatorial properties.

In this section, we prove the stabilization phenomenon conjectured in \cite{dugan2019tiered}. We first define the weight disparity of a permutation and find a lower bound for weight disparity in Theorem \ref{thm:disparity}. We then use this bound to prove Theorem \ref{thm:stabilization}. We thus give an explicit formula for the formal power series $W_{d}(t)$, enabling the study of certain combinatorial properties in the coefficients of $W_d(t)$.

\begin{definition}\label{def:3.1}
We denote the \textit{maximum weight} of a permutation of length $n$ with $d$ descents by $\maxwt(n,d)$.
\end{definition}

It was shown in \cite{dugan2019tiered} that the maximum weight of such a permutation is $d(n-d-1)$.

The following lemma follows directly from the proof of Theorem 6.10 in \cite{dugan2019tiered}. A permutation that ends in $1$ is of maximum weight when its subpermutation $\pi_L$ is a single component, in which case $w(\sigma)= \des(\pi_L)+w(\pi_L) \leq (d-1)(n-d-1).$

\begin{lemma}\label{lem:3.2}
Let $\sigma \in S_n$ be a permutation with $d$ descents. If we fix the last number in $\sigma$ to be $1$, then $\maxwt(\sigma) = (d-1)(n-d-1)$.
\end{lemma}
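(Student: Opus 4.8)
The plan is to reduce everything to the known bound $\maxwt(n,d)=d(n-d-1)$ by exploiting two structural consequences of $\sigma=a_1\cdots a_{n-1}1$ ending in $1$. When we append $n+1$ and split as in Definition~\ref{def:2.1}, the minimum $1$ occupies the penultimate slot, so step (i) produces $\sigma_\ell:=a_1\cdots a_{n-1}$, the singleton $1$, and the singleton $n+1$; the last two contribute nothing to the weight, whence
\[
 w(\sigma)=\sum_{i=1}^{r}\bigl(\des(P_i)+w(P_i)\bigr),
\]
where $P_1,\dots,P_r$ are the blocks cut from $\sigma_\ell$ at its successive maxima in steps (ii)--(iii). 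By construction each $P_i$ ends in its own largest entry.

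Next I would prove the auxiliary fact that appending a fresh maximum never changes the weight: $w(\pi\cdot M)=w(\pi)$ whenever $M$ exceeds every entry of $\pi$. The induction on length is immediate from Definition~\ref{def:2.3}: appending $M$ alters only the block to the right of the global minimum, replacing its content $\pi_r$ by $\pi_r M$; since $\pi_r M$ ends in its maximum we have $\des(\pi_r M)=\des(\pi_r)$, and $w(\pi_r M)=w(\pi_r)$ by the induction hypothesis. Applying this to each block above gives $w(P_i)=w(P_i^{-})$, where $P_i^{-}$ deletes the trailing (maximal) entry of $P_i$; thus $\des(P_i^{-})=\des(P_i)=:e_i$ while the length drops from $m_i:=|P_i|$ to $m_i-1$.

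For the upper bound I would feed each shortened block into the known formula. As $e_i\le m_i-2$, we get $w(P_i)=w(P_i^{-})\le\maxwt(m_i-1,e_i)=e_i(m_i-e_i-2)$, so that $\des(P_i)+w(P_i)\le e_i(m_i-e_i-1)=:g(e_i,m_i)$. Since each of the $r-1$ cut points of $\sigma_\ell$ is a descent (a suffix-maximum exceeds the entry after it), we have $\sum_i m_i=n-1$ and $\sum_i e_i=(d-1)-(r-1)=d-r$. It then suffices to bound $\sum_i g(e_i,m_i)$, and the key estimate is the merging inequality
\[
 g(a+b+1,\,p+q)\ge g(a,p)+g(b,q),
\]
which a short computation confirms from $p\ge a+2$ and $q\ge b+2$. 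Recombining adjacent blocks therefore never decreases the sum, so it is largest when $r=1$, giving $w(\sigma)\le g(d-1,n-1)=(d-1)(n-d-1)$.

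For the matching lower bound I would take a permutation $\tau$ of length $n-2$ with $d-1$ descents realizing $\maxwt(n-2,d-1)=(d-1)(n-d-2)$, relabel it onto $\{2,\dots,n-1\}$, and set $\sigma=\tau\,n\,1$. This ends in $1$ and has $d$ descents, and by the reduction together with the appending-maximum lemma its weight is $\des(\tau\cdot n)+w(\tau\cdot n)=(d-1)+w(\tau)=(d-1)+(d-1)(n-d-2)=(d-1)(n-d-1)$. I expect the main obstacle to be the appending-a-maximum lemma and the accompanying descent bookkeeping (especially verifying that every cut point is a descent and that singleton blocks are handled correctly); once these structural facts are secured, the optimization and the construction are routine.
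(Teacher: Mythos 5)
Your block decomposition, the descent bookkeeping ($\sum_i m_i=n-1$, $\sum_i e_i=d-r$, every cut point a descent), the merging inequality, and the extremal construction all check out, and your argument is genuinely self-contained, whereas the paper's ``proof'' of Lemma \ref{lem:3.2} is only a citation to the proof of Theorem 6.10 in \cite{dugan2019tiered}. The genuine gap is in the one statement everything else rests on: the auxiliary claim that $w(\pi\cdot M)=w(\pi)$ whenever $M$ exceeds every entry of $\pi$, which you assert follows by an ``immediate'' induction on length. It does not. Under Definition \ref{def:2.3} the weight of $\pi$ is computed from the split of $\pi$ with a fresh maximum \emph{already appended}, so the subpermutation to the right of the global minimum is not $\pi_r$ but $\pi_r\hat{M}$ (in Example \ref{ex:2.4} this block is $659243A$, carrying the appended $A$). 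Comparing $w(\pi M)$ with $w(\pi)$ therefore reduces your claim to the same claim for the sequence $\pi_r\hat{M}$, which has length $|\pi_r|+1$. When $\pi_L$ is nonempty this is shorter than $\pi$ and induction on length applies; but when $\pi$ begins with its global minimum, $|\pi_r\hat{M}|=|\pi|$, and the inductive step invokes the statement for a sequence of the \emph{same} length, so the induction never bottoms out. (This is precisely the subtlety that makes the recursion defining $w$ itself terminate only for a nonobvious reason, so it cannot be waved through.)

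The gap is repairable without changing your architecture: induct not on length but on the number of entries of $\pi$ that do not lie in its terminal increasing run of largest entries (its ``core''). In the problematic case the reduction deletes the minimum from the front and appends a maximum at the back, so the core strictly shrinks even though the length does not; when $\pi_L\neq\emptyset$ the core also shrinks (it loses $\pi_L$ and the minimum); and the base case, an increasing sequence, has weight $0$ on both sides. Two smaller repairs are also needed. First, your hypothesis $p\geq a+2$, $q\geq b+2$ for the merging inequality fails for singleton blocks ($p=1$, $a=0$), but the inequality survives: with $P=p-a-1\geq 0$ and $Q=q-b-1\geq 0$ the difference is $aQ+bP+P+Q\geq 0$, so only $P,Q\geq 0$ is needed. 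Second, the construction $\sigma=\tau\, n\,1$ requires $d\leq n-2$ so that a permutation of length $n-2$ with $d-1$ descents exists; the remaining case $d=n-1$ (only the decreasing permutation, whose weight is $0=(d-1)(n-d-1)$) must be checked separately. With these fixes your proof is complete, and it has the merit of actually proving a statement the paper leaves to an external reference.
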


\begin{definition}\label{def:3.3}
Let $\sigma \in S_n$ be a permutation with $d$ descents. We define the $\textit{weight disparity}$ $\Delta(\sigma)$ of $\sigma$ as $\Delta(\sigma) = \maxwt(n,d)-w(\sigma)$.
\end{definition}

We now prove a lower bound for weight disparity, $\Delta(\sigma)$.

\begin{theorem}\label{thm:disparity}
Let $\sigma \in S_n$ be a permutation with $d$ descents. If $\sigma$ does not start with $1$, then $\Delta(\sigma) \geq n-d-1$.
\end{theorem}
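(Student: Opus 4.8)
The plan is to analyze $w(\sigma)$ directly through the splitting used in its definition and to bound the weight of each resulting block by the known value of $\maxwt$. Since $\sigma$ does not start with $1$, its minimum entry $1$ sits in some position $m \geq 2$. When we append $n+1$ and split as in Definition~\ref{def:2.1}, the entry $1$ is isolated, the prefix $\sigma_\ell = a_1\cdots a_{m-1}$ is nonempty and splits into blocks $\sigma_{\ell 1},\dots,\sigma_{\ell s}$ with $s \geq 1$, and the suffix is $\sigma_r = a_{m+1}\cdots a_n(n+1)$. First I would record that the $s-1$ junctions among the blocks left of $1$, together with the junction immediately before $1$, are all descents, while the junction $1 \mid \sigma_r$ is not; this gives exactly $s$ descents at block boundaries, so the descents internal to the blocks number $d-s$, and hence
\begin{equation*}
w(\sigma) = (d-s) + \sum_{i=1}^{s} w(\sigma_{\ell i}) + w(\sigma_r).
\end{equation*}

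Next I would exploit that, by construction, each block $\sigma_{\ell i}$ ends in its own maximum and $\sigma_r$ ends in $n+1$. The key auxiliary fact is that appending the maximum element to the end of a permutation leaves its weight unchanged; granting this, deleting the terminal maximum from a block preserves both its weight and its descent number while reducing its length by one. Combined with $\maxwt(\ell,\delta) = \delta(\ell-\delta-1)$ from \cite{dugan2019tiered}, this yields $w(\sigma_{\ell i}) \leq \maxwt(\ell_i - 1, d_i)$ and $w(\sigma_r) \leq \maxwt(R, d_r)$, where $\ell_i, d_i$ are the length and descent number of $\sigma_{\ell i}$, where $R = n-m$, and where $d_r = \des(\sigma_r)$. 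I expect this auxiliary fact to be the main obstacle: a naive induction on length stalls precisely when the minimum sits at the front of the permutation, so one must strengthen the statement to permit appending several consecutive maxima at once, after which strong induction on the length of the underlying block closes the argument.

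Finally I would collect the estimates through a convexity argument. Writing $g(r,e) = e(r-e-1)$, the reduced blocks have lengths summing to $n-1-s$ and descent numbers summing to $d-s$. Since $g(r_1,e_1) + g(r_2,e_2) \leq g(r_1+r_2,\, e_1+e_2)$ whenever each reduced length is at least its descent number (the difference being $e_1(r_2-e_2) + e_2(r_1-e_1) \geq 0$), repeated merging gives $\sum g \leq g(n-1-s,\, d-s) = (d-s)(n-d-2)$. Therefore
\begin{equation*}
w(\sigma) \leq (d-s) + (d-s)(n-d-2) = (d-s)(n-d-1) \leq (d-1)(n-d-1),
\end{equation*}
the last inequality using $s \geq 1$ and $n-d-1 \geq 0$. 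Subtracting from $\maxwt(n,d) = d(n-d-1)$ yields $\Delta(\sigma) \geq n-d-1$. Notably this recovers exactly the extremal value $(d-1)(n-d-1)$ appearing in Lemma~\ref{lem:3.2}, with equality traceable to permutations ending in $1$.
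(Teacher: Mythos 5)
Your proof is correct, but it follows a genuinely different route from the paper's. The paper writes $\sigma = \pi_L \cdot 1 \cdot \pi_R$ and splits into two cases: if $\pi_R$ is empty it quotes Lemma \ref{lem:3.2} directly, and otherwise it bounds the identity $w(\sigma) = w(\pi_L \cdot 1) + w(\pi_R) + \des(\pi_R)$ by applying Lemma \ref{lem:3.2} to the first term, the formula $\maxwt(n,d) = d(n-d-1)$ to the second, and finishing with a short algebraic manipulation. You instead work with the full block decomposition of Definition \ref{def:2.1}: counting the $s$ junction descents to get $w(\sigma) = (d-s) + \sum_i w(\sigma_{\ell i}) + w(\sigma_r)$, stripping the terminal maximum from each block, bounding each reduced block by $\maxwt$, and merging the bounds via the superadditivity of $g(r,e) = e(r-e-1)$. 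Your route buys independence from Lemma \ref{lem:3.2}, whose proof the paper only cites from \cite{dugan2019tiered}, and it yields the sharper conclusion $\Delta(\sigma) \geq s(n-d-1)$ where $s$ is the number of blocks preceding the minimum; the paper's route buys brevity by outsourcing the extremal bound for permutations ending in $1$. Two remarks. First, your ``auxiliary fact'' (trailing maxima can be appended or stripped without changing weight) is not an extra cost peculiar to your approach: the paper's identity $w(\sigma) = w(\pi_L \cdot 1) + w(\pi_R) + \des(\pi_R)$ uses it silently, since the rightmost block of the split is $\pi_R \cdot (n+1)$ rather than $\pi_R$ itself. Second, you correctly locate the inductive subtlety --- a naive induction on length stalls exactly when the minimum is the first entry --- and your fix (prove invariance under appending a run of consecutive maxima, by strong induction on the length of the part preceding the run) is the right one; in a final write-up this should be stated and proved as a standalone lemma rather than granted.
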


\begin{proof}
Let $\sigma \in S_n$ be a permutation with $d$ descents such that $\sigma = \pi_L \cdot 1 \cdot \pi_R$, where the subpermutation $\pi_L$ is nonempty. We have two cases:

\begin{enumerate}[label=(\roman*)]
\item
The subpermutation $\pi_{R}$ is empty. Then $\sigma = \pi_{L} \cdot 1$, and by Lemma 3.1, we have $w(\sigma) \leq (d-1)(n-d-1)$ so $\Delta(\sigma) \ge n-d-1$.

\item
The subpermutation $\pi_{R}$ is nonempty. Then $\sigma = \pi_{L} \cdot 1 \cdot \pi_{R}$. Let $\pi_L$ be a permutation on $\ell$ numbers with $des(\pi_L) = q$. We have $\pi_{R} \in S_{n-\ell-1}$ and $des(\pi_{R}) = d-q-1$. We proceed to bound $w(\sigma)$. Computing the weight of $\sigma$, we have $$w(\sigma) = w(\pi_{L} \cdot 1) + w(\pi_{R}) + \des(\pi_{R}),$$ which implies $$w(\sigma) \leq q(\ell-q-1) + (d-q-1)(n-\ell-d+q).$$
Since $\des(\pi_R) = d-q-1 \geq 0$ and $\des(\pi_L) = q < \ell$, we have $(q-d+1)(\ell-q-1) \leq 0$. Similarly, we have $des(\pi_R)=d-q-1<n-\ell-1$ and so $q(-n+\ell+d-q) \leq 0$. Combining these two inequalities, we get
\begin{equation}\label{eq: 1}
    (q-d+1)(\ell-q-1)+q(-n+\ell+d-q) \leq 0.
\end{equation}
Adding $(d-1)(n-d-1)$ to \eqref{eq: 1} and substituting for $w(\sigma)$, we obtain $$w(\sigma) \leq (d-1)(n-d-1),$$
or equivalently, $$\Delta(\sigma) \geq n-d-1. \eqno\qedhere$$
\end{enumerate}
\end{proof}

\begin{definition} \label{def:3.5}
    We denote the coefficient of $x^d$ in $E_n(x,q)$ by $E_n[x^d]$ and the coefficient of $x^dq^m$ in $E_n(x,q)$ by $E_n[x^dq^m]$.
\end{definition}

We now show that the coefficients of $E_{n}[x^d]$ display a stabilization phenomenon.
\begin{theorem}
\label{thm:stabilization}
    For all $k, d, m \in \mathbb{N}$ such that $m = d+k+1$ and $n \ge m$, the value of $E_n[x^dq^{\maxwt(n,d)-k}]$ is independent of $n$.
\end{theorem}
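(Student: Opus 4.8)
The plan is to interpret the coefficient combinatorially and then build a descent- and disparity-preserving bijection that deletes one letter. First I would observe that $E_n[x^d q^{\maxwt(n,d)-k}]$ equals the number of $\sigma \in S_n$ with $\des(\sigma) = d$ and weight disparity $\Delta(\sigma) = k$; call this quantity $N_n(d,k)$. The goal is then to show $N_{n+1}(d,k) = N_n(d,k)$ for every $n \ge m = d+k+1$, which yields the asserted independence by telescoping. The role of Theorem \ref{thm:disparity} is to kill boundary effects: whenever $n+1 \ge d+k+2$, any $\sigma \in S_{n+1}$ with $d$ descents and disparity exactly $k$ satisfies $k < (n+1)-d-1$, so by the contrapositive of the disparity bound it must begin with $1$. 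Hence for all $n \ge m$ the count $N_{n+1}(d,k)$ enumerates only permutations starting with $1$.

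Next I would set up the map $\Phi$ sending a permutation $\sigma = 1\cdot\tau \in S_{n+1}$ that begins with $1$ to the flattening $\tau' \in S_n$ of $\tau$ (its inverse prepends $1$ and shifts every entry up by $1$). This is visibly a bijection between the $\sigma \in S_{n+1}$ beginning with $1$ and all of $S_n$. Since the leading $1$ is a nondescent and flattening preserves relative order, $\des(\sigma) = \des(\tau')$, so $\Phi$ respects the descent statistic. It then remains to show that $\Phi$ preserves weight disparity, i.e. $\Delta_{n+1}(\sigma) = \Delta_n(\tau')$; using $\maxwt(n+1,d) - \maxwt(n,d) = d$, this is equivalent to the single identity $w(\sigma) = w(\tau') + d$.

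To prove $w(1\cdot\tau) = w(\tau') + d$, I would unwind one step of Definition \ref{def:2.3}: completing $\sigma$ by a maximal letter and splitting isolates the leading $1$, leaving one nontrivial block which, after flattening, is exactly $\tau'$ with its maximum appended. Thus $w(\sigma) = \des(\tau') + w(\tau'\cdot(n+1))$, and since $\des(\tau') = d$ the claim reduces to the key lemma that appending the maximum letter to the end of a permutation leaves its weight unchanged: $w(\rho\cdot(m+1)) = w(\rho)$ for all $\rho \in S_m$. I expect this lemma to be the main obstacle. I would prove it by strong induction on $m$: writing $\rho = \alpha \cdot 1 \cdot \beta$, the splittings used to compute $w(\rho)$ and $w(\rho\cdot(m+1))$ agree on all blocks arising from $\alpha$ and differ only in the rightmost block, so the difference collapses to $w(\beta(m+1)(m+2)) - w(\beta(m+1))$, an instance of the same statement on a strictly shorter permutation whenever $\alpha$ is nonempty.

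The delicate point is precisely the boundary case in which $\alpha$ is empty, i.e. when $\rho$ itself begins with $1$: there the reduction returns a permutation of the same length $m$, so the outer induction does not immediately close. I would handle this with a secondary induction on the length of the maximal initial increasing run $1,2,3,\dots$ of $\rho$, checking that one reduction step strictly shortens this run (the identity permutation being the trivial base case) until one reaches a permutation not beginning with $1$, where the length-reducing step of the main induction applies. Granting the lemma, $w(\sigma) = w(\tau') + d$ and hence $\Delta_{n+1}(\sigma) = \Delta_n(\tau')$; restricting $\Phi$ to disparity exactly $k$ then gives $N_{n+1}(d,k) = N_n(d,k)$ for all $n \ge m$, completing the proof.
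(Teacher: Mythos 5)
Your proposal is correct and follows essentially the same route as the paper: the disparity bound of Theorem \ref{thm:disparity} forces every permutation contributing to $E_{n+1}[x^dq^{\maxwt(n+1,d)-k}]$ to begin with $1$, the strip-the-leading-$1$ bijection then matches these with the contributors to $E_n[x^dq^{\maxwt(n,d)-k}]$, and telescoping finishes the argument. The only difference is that you explicitly isolate and prove (by your double induction) the identity $w(1\cdot\tau) = w(\tau') + d$, equivalently that appending a maximum letter preserves weight, which the paper simply asserts in case (i) of its proof; this is a worthwhile addition rather than a change of method.
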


\begin{proof}
We show that for sufficiently large $n$, the number of permutations of length $n$ with a given weight disparity is constant. Let $\sigma \in S_{m+1}$ be such that $m=d+k+1$ for $k, m, d \in \mathbb{N}$. We determine the form of the permutations $\sigma$ counted by $E_{m+1}[x^dq^{\maxwt(m+1,d)-k}]$. We have the following cases:

\begin{enumerate}[label=(\roman*)]
\item
Suppose $\sigma$ is of the form  $1 \cdot \pi$, where $\pi \in S_m$ and $\des(\pi)=d$ and $w(\pi) = \maxwt(m,d)-k= (d-1)(m-d-1)$. Then we have $$w(\sigma) = w(\pi)+d = \maxwt(m+1,d)-k.$$
So $\sigma$ is counted by $E_{m+1}[x^dq^{\maxwt(m+1,d)-k}]$. Note that removing $1$ from the beginning of $\sigma$ and and subtracting 1 from every element is a bijection that gives us permutations $\pi \in S_m$ with $d$ descents and of weight equal to $w(\pi) = \maxwt(m,d)-k$. So $\sigma$ is also counted by $E_{m}[x^dq^{\maxwt(m,d)-k}]$.

\item
Suppose $\sigma $ is of the form $1 \cdot \pi$, where $\des(\pi) \neq d$ or $w(\pi) \neq d(m-d-1)-k$. It is easy to see that $\sigma$ is not counted by $E_{m+1}[x^dq^{\maxwt(m+1,d)-k}]$. 

\item
Suppose $\sigma$ does not start with $1$. Let $p = \des(\sigma)$. By Theorem \ref{thm:disparity}, we have $\Delta(\sigma) \geq m - \des(\sigma)$, which implies the following:
\begin{align*}
    \maxwt(m+1,p)-w(\sigma)+p &\geq m \\
    \maxwt(m+1,p)-w(\sigma)+p &> m-1 = d + k \\
    p-w(\sigma) &> d - (\maxwt(m+1, p)-k).
\end{align*}
So either $p \neq d$ or  $w(\sigma) \neq \maxwt(m+1, p)-k$. Thus $\sigma$ is not counted by $E_{m+1}[x^dq^{\maxwt(m+1,d)-k}]$.
\end{enumerate}

Since the number of permutations with weight disparity $k$ and $d$ descents in $S_{m+1}$ is the same as those in $S_{m}$, we have
$$E_{m}[x^dq^{\maxwt(m,d)-k}] = E_{m+1}[x^dq^{\maxwt(m+1,d)-k}].$$
It is easy to see that, by induction, the above argument shows that
$$E_{m+i}[x^dq^{\maxwt(m+i,d)-k}] = E_{m+i+1}[x^dq^{\maxwt(m+i+1,d)-k}]$$ for any $i \geq 0$.
\end{proof}

Theorem \ref{thm:stabilization} shows that the formal power series $W_d(t)$ defined in \cite{dugan2019tiered} exists and is given by $W_d(t^k) = E_{d+k+1}[x^dq^{(d-1)k}]$. We can now extract $W_d(t)$ from the stabilized coefficients of $E_n(x,q)$.

\begin{definition} \label{def:3.7}
If $E_{n}[x^d]$ stabilizes to $q^N + a_{1}q^{N-1} + a_{2}q^{N-2}+ a_{3}q^{N-3} + \cdots$, where $N=d(n-d-1)$, then the formal power series $W_d(t)$ is defined in \cite{dugan2019tiered} as 
    $$W_{d}(t) = 1 + a_{1}t + a_{2}t^2 + a_{3}t^3 + \cdots$$
\end{definition}

\begin{example} \label{ex:3.8}
By Theorem \ref{thm:stabilization} and our data for the $q$-Eulerian polynomials, we have
\begin{align*}
    W_1(t) &= 1+3t+7t^2+15t^3+31t^4+63t^5+\cdots \\
    W_2(t) &= 1+4t+11t^2+31t^3+65t^4+157t^5+\cdots\\
    W_3(t) &= 1+5t+16t^2+41t^3+112t^4+244t^5+\cdots\\
    W_4(t) &= 1+6t+22t^2+63t^3+155t^4+393t^5+\cdots \\
    W_5(t) &= 1+7t+29t^2+92t^3+247t^4+590t^5+\cdots
\end{align*}
\end{example}

\section{A Recurrence Relation for $E_n(x,q)$}
\label{sec: formula}

The classical Eulerian polynomials $A_n(x)$ in Definition \ref{def:2.5} enumerate permutations according to their number of descents. The Eulerian polynomials $A_n(x)$ satisfy the following recurrence for $n \geq 1$: 
\begin{align*}
A_0(x)&=1, \\
A_n(x) &= \sum_{i=1}^{n-1} \binom{n-1}{i} A_i(x) \cdot A_{n-i-1}(x) \cdot x.
\end{align*}

We now derive a recurrence relation for the $q$-Eulerian polynomials $E_n(x,q)$, similar to the recurrence for $A_n(x)$.

\begin{theorem} \label{thm:formula}
The $q$-Eulerian polynomials $E_n(x,q)$ satisfy the recurrence relation
    \begin{align*}
        E_{0}(qx,q) &= 1, \\
        E_n(x,q) &= \sum_{i=1}^{n-1} \binom{n-1}{i} E_i(x,q) \cdot E_{n-i-1}(qx,q) \cdot x + E_{n-1}(qx,q).
    \end{align*}
\end{theorem}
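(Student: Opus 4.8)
The plan is to condition on the position of the minimum entry $1$ in $\sigma \in S_n$. Writing $\sigma = \pi_L \cdot 1 \cdot \pi_R$ with $|\pi_L| = i$, the descent statistic splits as $\des(\sigma) = \des(\pi_L) + 1 + \des(\pi_R)$ when $\pi_L$ is nonempty (the entry before $1$ is a descent, while $1$ to the start of $\pi_R$ is an ascent) and as $\des(\sigma) = \des(\pi_R)$ when $\pi_L$ is empty. For the weight, appending $n+1$ and applying Definition \ref{def:2.1} isolates $1$ as the global minimum, so the blocks are the split-by-largest pieces of $\pi_L$ together with the single block $\pi_R \cdot (n+1)$; this yields $w(\sigma) = w(\pi_L \cdot 1) + \des(\pi_R) + w(\pi_R \cdot (n+1))$, where $\pi_L \cdot 1$ denotes $\pi_L$ with a new minimum appended. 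Granting a lemma that appending an entry larger than all others leaves the weight unchanged, we get $w(\pi_R \cdot (n+1)) = w(\pi_R)$ and $\des(\pi_R \cdot (n+1)) = \des(\pi_R)$. Summing $x^{\des\sigma} q^{w\sigma}$, the right block contributes $\sum_{\pi_R}(xq)^{\des \pi_R} q^{w \pi_R} = E_{n-1-i}(qx,q)$, the empty-$\pi_L$ case contributes exactly the extra term $E_{n-1}(qx,q)$, and distributing values between $\pi_L$ and $\pi_R$ accounts for the $\binom{n-1}{i}$. Setting $f_i(x,q) := \sum_{\pi \in S_i} x^{\des \pi} q^{w(\pi \cdot 1)}$, I obtain
\[ E_n(x,q) = E_{n-1}(qx,q) + \sum_{i=1}^{n-1} \binom{n-1}{i}\, f_i(x,q)\, x\, E_{n-1-i}(qx,q), \]
so the theorem reduces to the identity $f_i(x,q) = E_i(x,q)$ for all $i$.

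The first supporting step is the append-a-maximum lemma: for any word $\upsilon$ and any $M$ exceeding every entry of $\upsilon$, $w(\upsilon \cdot M) = w(\upsilon)$. I would prove it by induction on $|\upsilon|$, in the slightly stronger form allowing any number of appended large entries $M_1 < \cdots < M_k$. Computing $w(\upsilon \cdot M_1 \cdots M_k)$ by Definition \ref{def:2.3} locates the global minimum inside $\upsilon$ at some position $j_0$; the left blocks (the split-by-largest of the prefix $\upsilon[1..j_0-1]$) coincide with those arising for $w(\upsilon)$, while the appended entries only lengthen the single right block, which is $\upsilon[j_0+1..]$ with large entries appended. Since $j_0 \ge 1$ the suffix is strictly shorter than $\upsilon$, so the induction hypothesis applies and that block's descent count and weight are unaffected. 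Stating the lemma with an arbitrary number of appended maxima is precisely what lets the induction close when $j_0 = 1$.

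Next I would produce a second, dual recurrence for $f_i$ by re-expanding $w(\pi \cdot 1)$ through the split-by-largest decomposition of $\pi$ itself. Writing $\pi = P_1 \cdot S$, where $P_1$ is the prefix ending at the global maximum of $\pi$ and $S$ is the remaining suffix, the weight definition gives $w(\pi \cdot 1) = \des(P_1) + w(P_1) + w(S \cdot 1)$, while $\des(\pi) = \des(P_1) + 1 + \des(S)$ whenever $S$ is nonempty (the case of empty $S$, the maximum at the last position, contributes the standalone term below). Each block $P_1$ terminates in its own maximum, so by the append-a-maximum lemma its descent count and weight agree with those of $P_1$ with that maximum deleted; hence the $P_1$-factor of the generating function is $E_{\ell-1}(qx,q)$ with $\ell = |P_1|$, and the $S$-factor is again $f_{i-\ell}$. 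Bookkeeping the value distribution with $\binom{i-1}{\ell-1}$ gives
\[ f_i(x,q) = E_{i-1}(qx,q) + \sum_{j=0}^{i-2} \binom{i-1}{j}\, E_j(qx,q)\, x\, f_{i-1-j}(x,q). \]

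Finally I would prove $f_i = E_i$ by strong induction on $i$, with $i = 0, 1$ immediate. Assuming $f_k = E_k$ for all $k < i$ and substituting into the two displayed recurrences (the first taken at $n = i$) turns both right-hand sides into sums of products of $E$'s; the reindexing $k \mapsto i-1-k$, together with $\binom{i-1}{k} = \binom{i-1}{i-1-k}$ and the fact that the two decompositions place the factor of $q$ on opposite blocks (an $x \leftrightarrow qx$ swap), matches the two sums term by term, so $f_i = E_i$. Substituting back into the first recurrence is then exactly the claimed identity. I expect the main obstacle to be setting up the second recurrence correctly: one must recognize that appending a minimum expresses the weight as a sum over the split-by-largest blocks, and that the append-a-maximum lemma is exactly what reduces each maximum-terminated block to an ordinary $E_j(qx,q)$ factor, producing a recurrence dual to the first. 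The closing reindexing is then a short but satisfying computation.
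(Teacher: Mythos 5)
Your proof is correct, but it reaches the theorem by a genuinely different route than the paper, so a comparison is worthwhile. The first stage is shared: exactly like the paper's Lemma \ref{lem:4.6}, you condition on the position of $1$, write $\sigma = \pi_L \cdot 1 \cdot \pi_R$, split descents and weight the same way ($w(\sigma) = w(\pi_L \cdot 1) + \des(\pi_R) + w(\pi_R)$), and note that the $\pi_R$-block contributes $E_{n-1-i}(qx,q)$ while the empty-$\pi_L$ case produces the extra summand $E_{n-1}(qx,q)$. Your series $f_i(x,q) = \sum_{\pi\in S_i} x^{\des \pi}q^{w(\pi\cdot 1)}$ is precisely the paper's $x^{-1}E^*_{i+1}(x,q)$ (Definition \ref{def:4.5}), so the crux in both proofs is the same identity $f_i = E_i$, which is the paper's Lemma \ref{lem:4.5}. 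There the paths diverge: the paper proves it with the explicit bijection of Lemma \ref{lem:4.3}, $\pi_L \cdot 1 \cdot \pi_R \mapsto \pi_R \cdot (i+1) \cdot \pi_L \cdot 1$, which preserves weight and raises the descent count by one, whereas you derive a second, dual recurrence for $f_i$ by peeling off the first split-by-largest block $P_1$ (the prefix ending at the maximum) and then prove $f_i = E_i$ by strong induction, matching the two recurrences term by term via $\binom{i-1}{j} = \binom{i-1}{i-1-j}$ and the $x \leftrightarrow qx$ swap. I checked your dual recurrence $f_i(x,q) = E_{i-1}(qx,q) + \sum_{j=0}^{i-2}\binom{i-1}{j}E_j(qx,q)\,x\,f_{i-1-j}(x,q)$ and the closing induction; both are sound. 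As for trade-offs: the paper's bijection is shorter and gives a direct combinatorial explanation of the coefficient identity (one map does all the work), while your argument costs an extra decomposition and an induction but stays entirely at the level of generating functions; moreover, you make explicit the append-a-maximum lemma ($\des(\upsilon\cdot M)=\des(\upsilon)$ and $w(\upsilon\cdot M) = w(\upsilon)$), which the paper uses silently every time it writes $w(\sigma) = w(\pi_L\cdot 1) + w(\pi_R) + \des(\pi_R)$, and your dual recurrence---with the $q$-shift landing on the opposite factor---is structurally close to Postnikov's recurrence for maxmin trees quoted in the remark closing Section \ref{sec: formula}, a connection the authors themselves flag as worth pursuing.
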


We introduce the following definitions and lemmas used in the proof of Theorem \ref{thm:formula}. 

\begin{definition} \label{def:4.2}
We define $S_n^{\prime}$ to be the set of all permutations in $S_n$ ending in $1$. That is, $S^\prime_{n} = \{\sigma \in S_n \mid \sigma = \pi \cdot 1,$ where $\pi \in S_{n-1}\}$.
\end{definition}

\begin{lemma} \label{lem:4.3}
There exists a bijection $f:S_n \leftrightarrow S^\prime_{n+1}$ such that, for all $\sigma \in S_n$, $w(f(\sigma)) = w(\sigma)$ and $\des(f(\sigma)) = \des(\sigma) + 1$.
\end{lemma}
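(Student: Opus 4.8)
The plan is to construct $f$ explicitly so that it mirrors the recursion defining $w$. Writing $\sigma = \pi_L \cdot 1 \cdot \pi_R$ for the decomposition of $\sigma \in S_n$ at its minimal entry $1$, I would define
$$f(\sigma) = \pi_R \cdot (n+1) \cdot \pi_L \cdot 1,$$
keeping the actual values of $\pi_L$ and $\pi_R$ (which together use $\{2,\dots,n\}$) and inserting the genuinely new maximum $n+1$ between $\pi_R$ and $\pi_L$. The guiding idea is that when $w(\sigma)$ is computed, one appends $n+1$ and splits at $1$, so that $\pi_R$ together with the appended maximum forms the single right piece $\sigma_R := \pi_R\cdot(n+1)$; the map $f$ is engineered to make this same block reappear as the first piece of $f(\sigma)$. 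It is worth flagging that the tempting map ``add $1$ to every entry and append $1$'' fails: under it $132 \mapsto 2431$, and $w(2431)=0\neq 1=w(132)$. Placing the new maximum immediately before $\pi_L$ is what makes the weights agree.

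Next I would verify that $f$ is a bijection onto $S'_{n+1}$ and tracks descents. Clearly $f(\sigma)$ ends in $1$ and uses each value of $\{1,\dots,n+1\}$ once, so it lands in $S'_{n+1}$; the inverse sends $\tau = c\cdot 1 \in S'_{n+1}$ to $\delta \cdot 1 \cdot \gamma$, where $c = \gamma \cdot (n+1) \cdot \delta$ is the decomposition of $c$ at its maximal entry $n+1$, and one checks this is two-sided. For descents, expand $\des(f(\sigma))$ over the junctions of $\pi_R(n+1)\pi_L 1$: the step $\pi_R \to n+1$ contributes nothing, while $n+1 \to \pi_L$ (or $n+1 \to 1$ when $\pi_L$ is empty) and $\pi_L \to 1$ each contribute a descent. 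Comparing with $\des(\sigma) = \des(\pi_L) + 1 + \des(\pi_R)$ (the junction $1 \to \pi_R$ being an ascent) yields $\des(f(\sigma)) = \des(\sigma) + 1$ in every case.

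The crux is the weight identity. To compute $w(f(\sigma))$ I append $n+2$ and split: the minimum $1$ now sits in the penultimate position, so the left block is $\pi_R(n+1)\pi_L$ and the right block is the singleton $n+2$, contributing $0$. Because $n+1$ is the global maximum of the left block and lies exactly at the end of $\pi_R$, the first cut of the maximum-splitting in Definition \ref{def:2.1} peels off precisely $\sigma_R = \pi_R(n+1)$, after which the remaining block is $\pi_L$ and splits exactly as it does in isolation. Hence
$$w(f(\sigma)) = \bigl(\des(\sigma_R) + w(\sigma_R)\bigr) + \sum_{P}\bigl(\des(P) + w(P)\bigr),$$
where $P$ ranges over the pieces of the max-split of $\pi_L$. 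Computing $w(\sigma)$ instead by appending $n+1$ and splitting at $1$ gives left block $\pi_L$ and right block $\pi_R(n+1) = \sigma_R$, producing the identical sum term by term; no induction is needed, since both sides reduce to the same multiset of pieces. I expect the main obstacle to be pinning down the correct $f$ rather than the verification: the new maximum must be inserted immediately before $\pi_L$ so that the max-splitting of $f(\sigma)$ isolates $\sigma_R$ first and then reproduces the split of $\pi_L$, and any other placement scrambles the pieces and destroys the identity.
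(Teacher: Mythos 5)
Your proposal is correct and takes essentially the same approach as the paper: the identical map $f(\pi_L \cdot 1 \cdot \pi_R) = \pi_R \cdot (n+1) \cdot \pi_L \cdot 1$, the identical inverse obtained by splitting at the maximum, and the same descent bookkeeping. Your term-by-term matching of the split pieces of $f(\sigma)\cdot(n+2)$ against those of $\sigma\cdot(n+1)$ is just a more explicit rendering of the paper's weight identity $w(f(\pi)) = w(\pi_L \cdot 1) + w(\pi_R) + \des(\pi_R) = w(\pi)$.
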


\begin{proof}
We first show that there exists a function $f: S_n \rightarrow S^\prime_{n+1}$ that preserves weight and increases number of descents by $1$. Let $\pi \in S_n$ be such that $\pi = \pi_L \cdot 1 \cdot \pi_R$, where the subpermutations $\pi_{L}$ and $\pi_R$ can be empty. Consider the function $f: S_n \rightarrow S^\prime_{n+1}$ defined by
    $$f(\pi) = f(\pi_L \cdot 1 \cdot \pi_R) = \pi_R \cdot (n+1) \cdot \pi_L \cdot 1.$$
That is, $f$ replaces $1$ with the maximum element $(n+1)$, exchanges $\pi_L$ and $\pi_R$, and appends a minimum element $1$ at the end. Notice that for any $\pi \in S_n$, we have $f(\pi) \in S^\prime_{n+1}$ and $\des(f(\pi)) = \des(\pi) + 1$. The weight of $f(\pi)$ is given by
    $$w(f(\pi)) = w(\pi_L \cdot 1) + w(\pi_R) + \des(\pi_R),$$
so $w(f(\pi)) = w(\pi)$. So $f$ preserves weight and increases the number of descents by $1$.

Now $f$ is clearly invertible as $(n+1)$ and $1$ in $f(\pi)$ uniquely determine $\pi_L$ and $\pi_R$. Moreover, $f$ is a bijection as $S_n$ and $S^\prime_{n+1}$ are equinumerous, where its inverse $f^{-1}$ must preserve weight and reduce the number of descents by 1.
\end{proof}

\begin{definition} \label{def:4.5}
We write $E_n^{*}(x,q)$ to denote the $q$-Eulerian polynomial
        $$\sum_{\sigma \in S_n^{\prime}} x^{\des(\sigma)}q^{w(\sigma)}.$$
\end{definition}

The following corollary is a direct result of Lemma \ref{lem:4.3}.
\begin{corollary} \label{lem:4.5}
We have
\begin{equation}
    xE_n(x,q) = E_{n+1}^{*}(x,q).
\end{equation}
\end{corollary}

\begin{lemma} \label{lem:4.6}
The $q$-Eulerian polynomials $E_n(x,q)$ satisfy the recurrence relation
$$E_n[x^d] = \sum_{i=1}^{n-1}\binom{n-1}{i} \Big ( \sum_{k=1}^{i} E_{n-i-1}[x^{d-k}] \cdot E_i[x^{k-1}] \cdot q^{d-k} \Big ) + E_{n-1}[x^d]q^d.$$
\end{lemma}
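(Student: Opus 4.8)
The plan is to classify the permutations $\sigma \in S_n$ by the position of the minimum element $1$, writing $\sigma = \pi_L \cdot 1 \cdot \pi_R$, and to track the pair $(\des, w)$ through this decomposition. The two facts I will rely on are the additivity relations $\des(\sigma) = \des(\pi_L \cdot 1) + \des(\pi_R)$ and $w(\sigma) = w(\pi_L \cdot 1) + w(\pi_R) + \des(\pi_R)$; the weight identity is exactly the one established in the proof of Lemma \ref{lem:4.3} (and reused in Theorem \ref{thm:disparity}), obtained by appending the maximum element and splitting $\sigma$ around $1$. Crucially, both statistics depend only on the relative order of the entries, which is what will let me sum over patterns and recover the binomial coefficient.

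First I would isolate the permutations beginning with $1$. If $\pi_L$ is empty then $\sigma = 1 \cdot \pi_R$, so $\des(\sigma) = \des(\pi_R)$ and $w(\sigma) = w(\pi_R) + \des(\pi_R)$. Flattening $\pi_R$ to a permutation of $S_{n-1}$ and extracting the coefficient of $x^d$ (so that $\des(\pi_R) = d$) produces a factor $q^{\des(\pi_R)} = q^d$ multiplying $\sum_{\des = d} q^{w(\pi_R)} = E_{n-1}[x^d]$. This accounts for the trailing term $E_{n-1}[x^d]\,q^d$ in the recurrence.

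Next, for $\pi_L$ nonempty, I would set $i = |\pi_L| \in \{1, \dots, n-1\}$, so that $|\pi_R| = n-1-i$. Since the entries of $\pi_L$ and $\pi_R$ partition the size-$(n-1)$ set $\{2, \dots, n\}$ into blocks of sizes $i$ and $n-1-i$, there are $\binom{n-1}{i}$ choices of which values go left, and for each choice the arrangements of $\pi_L$ and $\pi_R$ range over all patterns in $S_i$ and $S_{n-1-i}$ independently, with $w$ and $\des$ unchanged by flattening. The sequence $\pi_L \cdot 1$ then ranges over all of $S'_{i+1}$, so its generating sum is $E^*_{i+1}(x,q)$, and by Lemma \ref{lem:4.5} the coefficient of $x^k$ there equals $E_i[x^{k-1}]$. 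The block $\pi_R$ contributes $E_{n-i-1}(x,q)$, and the extra summand $\des(\pi_R)$ in the weight formula supplies a factor $q^{\des(\pi_R)}$. Extracting the coefficient of $x^d$ forces $\des(\pi_L \cdot 1) = k$ and $\des(\pi_R) = d-k$; the admissible range is $1 \le k \le i$, since $\pi_L \cdot 1$ ends in $1$ (hence has at least one descent) and has length $i+1$. This yields precisely $\sum_{i=1}^{n-1}\binom{n-1}{i}\sum_{k=1}^{i} E_{n-i-1}[x^{d-k}]\,E_i[x^{k-1}]\,q^{d-k}$, with the boundary case $i = n-1$ (empty $\pi_R$) absorbed correctly by $E_0[x^0] = 1$.

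Summing the two cases gives the claimed identity. The only genuinely delicate step is the weight decomposition $w(\sigma) = w(\pi_L \cdot 1) + w(\pi_R) + \des(\pi_R)$: one must check that appending the global maximum and splitting around $1$ peels off $\pi_R$ (together with the appended maximum) as a single subpermutation whose descent-plus-weight contribution equals $w(\pi_R) + \des(\pi_R)$, using that appending the maximum leaves the weight unchanged. Since this identity is already in hand from Lemma \ref{lem:4.3}, the remainder is careful bookkeeping of the degree shifts, namely the $x^{k-1}$ coming from Lemma \ref{lem:4.5} and the $q^{d-k}$ coming from the weight formula, which I expect to be routine once the indexing conventions are fixed.
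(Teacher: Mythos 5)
Your proposal is correct and follows essentially the same route as the paper's proof: both classify $\sigma \in S_n$ by the position of $1$, writing $\sigma = \pi_L \cdot 1 \cdot \pi_R$, use the weight decomposition $w(\sigma) = w(\pi_L \cdot 1) + w(\pi_R) + \des(\pi_R)$ together with Lemma \ref{lem:4.5} to replace $E^*_{i+1}[x^k]$ by $E_i[x^{k-1}]$, and account for the $\binom{n-1}{i}$ choices of values in $\pi_L$ and the factor $q^{d-k}$ from $\des(\pi_R)$. Your explicit treatment of the boundary cases (the range $1 \le k \le i$ and the empty-$\pi_R$ case absorbed by $E_0[x^0]=1$) is slightly more careful than the paper's, but the argument is the same.
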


\begin{proof}
Let $\sigma \in S_n$ be a permutation with $d$ descents. We have two cases:

\emph{Case 1.} Suppose $\sigma = 1 \cdot \pi_1$ for some $\pi_1 \in S_{n-1}$. We have $w(\sigma) = w(\pi_1) + d$. So the number of permutations $\sigma \in S_n$ with $d$ descents and of weight $w(\sigma)$ is equal to the number of permutations $\pi_1 \in S_{n-1}$ with $d$ descents and of weight $w(\pi_1)+d$. This is counted by $E_{n-1}[x^d]q^d$.

\emph{Case 2.} Suppose $\sigma = \pi_L \cdot 1 \cdot \pi_R$, where $1$ is in position $i+1$ of $\sigma$ and $1 \leq i \leq n-1$. It is clear that $\pi_L \in S_i$ and $\pi_R \in S_{n-i-1}$. There are $\binom{n-1}{i}$ ways to select the $i$ elements in the subpermutation $\pi_L$. Let $k = \des(\pi_L \cdot 1)$, where $1\leq k \leq i$. Then we have $\des(\pi_R)=d-k$ and $w(\sigma) = w(\pi_L \cdot 1) + w(\pi_R) + \des(\pi_R)$. The permutations $\pi_L \cdot 1$ are counted by $E^*_{i+1}[x^{k}]$, and the permutations $\pi_R$ are counted by $E_{n-1-i}[x^{d-k}]$. We know that $E_n[x^dq^w] \cdot E_{m}[x^eq^v]$ counts permutations of length $n+m$ with $d+e$ descents and of weight $w+v$. Thus permutations of length $n$ with $d$ descents and of weight $w(\pi_L \cdot 1) + w(\pi_R)$ are counted by $E^*_{i+1}[x^{k}] \cdot E_{n-1-i}[x^{d-k}].$ We add $\des(\pi_R) = d-k$ to the weight of these permutations to count permutations of weight $w(\sigma)$, length $n$, and $d$ descents. Multiplying by $q^{d-k}$, we have $q^{d-k} \cdot E^*_{i+1}[x^{k}] \cdot E_{n-1-i}[x^{d-k}].$
Thus the permutations $\sigma$ are counted by
   $$\sum_{i=1}^{n-1} \binom{n-1}{i} \sum_{k=1}^{i} E_{i+1}^{*}[x^{k}] \cdot E_{n-1-i}[x^{d-k}] q^{d-k},$$
which by Corollary \ref{lem:4.5} is equal to
   $$\sum_{i=1}^{n-1} \binom{n-1}{i} \sum_{k=1}^{i} E_{i}[x^{k-1}] \cdot E_{n-1-i}[x^{d-k}]q^{d-k}.$$
Combining Case 1 and Case 2, we have
    $$E_n[x^d]=\sum_{i=1}^{n-1} \binom{n-1}{i} \Big ( \sum_{k=1}^{i} E_{i}[x^{k-1}] \cdot E_{n-1-i}[x^{d-k}]q^{d-k} \Big ) +E_{n-1}[x^d]q^d. \eqno\qedhere$$
\end{proof}

\begin{proof}[Proof of Theorem \ref{thm:formula}]
From Lemma \ref{lem:4.6}, we have $$E_n(x,q) = \sum_{d=0}^{n-1} x^d \Big ( \sum_{i=1}^{n-1}\binom{n-1}{i} \Big ( \sum_{k=1}^{i} E_{n-i-1}[x^{d-k}] \cdot E_i[x^{k-1}] \cdot q^{d-k} \Big )  + E_{n-1}[x^d]q^d \Big ).$$
If we distribute the first summation and let $\ell = d-k$, we get
\begin{align*}
        E_n(x,q) &= \sum_{i=1}^{n-1} \binom{n-1}{i} \Big ( \sum_{k=1}^{i} \Big ( \sum_{\ell=0}^{n-1-i} E_{n-i-1}[x^{\ell}] \cdot q^\ell \cdot  E_{i}[x^{k-1}] \cdot x^{\ell+k} \Big ) \Big )\\
        &+ \sum_{j=0}^{n-1} E_{n-1}[x^j] q^j  x^j.
\end{align*}
We can rearrange terms to get
\begin{align*}
        E_n(x,q)&=\sum_{i=1}^{n-1} \binom{n-1}{i} \Big ( \sum_{k=1}^{i} E_{i}[x^{k-1}]x^{k-1} \Big ) \Big ( \sum_{\ell=0}^{n-i-1} E_{n-i-1}[x^\ell] \cdot q^\ell x^{\ell} \Big ) \cdot x\\
        &+\sum_{j=0}^{n-1} E_{n-1}[x^j]  q^j  x^j.
\end{align*}
Substituting the corresponding $q$-Eulerian polynomials, we have
$$E_n(x,q) = \sum_{i=1}^{n-1} \binom{n-1}{i} E_i(x,q) \cdot E_{n-i-1}(qx,q) \cdot x + E_{n-1}(qx,q). \eqno\qedhere$$
\end{proof}

\begin{remark}
We compare our recurrence relation for $E_n(x,q)$ with other recurrences in the literature. Note that if we set $q=1$, Theorem \ref{thm:formula} becomes the recurrence for the classical Eulerian polynomials $A_n(x)$. Since there are type B and D Eulerian polynomials that satisfy recurrence relations analogous to the recurrence for the polynomials $A_n(x)$ \cite{hyatt2016recurrences}, there may also be other $q$-analogs like the one we study.

Finally, we note that the recurrence relation in Theorem \ref{thm:formula} resembles the recurrence derived by Postnikov for the polynomials $f_n(x)$, given by $$f_n(x) = x \Big ( \sum_{\ell=1}^{n-1} \binom{n-1}{\ell} \cdot f_\ell^{\prime}(x) f_{n-\ell-1}(x) + f_{n-1}(x) \Big ),$$ where $f_n(x)= \sum_k f_{nk}x^k$ and $f_{nk}$ denotes the number of all maxmin trees on the set $[n+1]$ with $k$ maxima \cite{postnikov1997intransitive}. Given that weight 0 maxmin trees are in bijection with permutations \cite{dugan2019tiered}, it would be interesting to explore this connection between all maxmin trees and permutations further.
\end{remark}

Using Lemma \ref{lem:4.6} and Theorem \ref{thm:formula}, we give a recursive formula for the coefficients of $E_n(x,q)$.
\begin{corollary} \label{cor:4.9}
The coefficients of the $q$-Eulerian polynomials $E_n(x,q)$ satisfy the recurrence relation
\begin{align*}
        E_n[x^dq^m] &= \sum_{i=1}^{n-1} \binom{n-1}{i} \Big ( \sum_{k=1}^{i} \Big ( \sum_{j=0}^{m} E_{n-i-1}[x^{d-k}q^{m-j}] \cdot E_{i}[x^{k-1}q^{k+j-d}] \Big ) \Big )\\
        &+ E_{n-1}[x^dq^{m-d}].
\end{align*}
\end{corollary}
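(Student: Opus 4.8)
The plan is to extract the coefficient-level recurrence directly from the polynomial recurrence in Theorem~\ref{thm:formula}, by tracking how the substitution $x \mapsto qx$ acts on individual monomials. Concretely, I would start from the already-established recurrence
\begin{equation*}
E_n(x,q) = \sum_{i=1}^{n-1} \binom{n-1}{i} E_i(x,q) \cdot E_{n-i-1}(qx,q) \cdot x + E_{n-1}(qx,q),
\end{equation*}
and ask: what is the coefficient of the monomial $x^d q^m$ on the left-hand side? The strategy is to compute the contribution of each term on the right-hand side to $[x^d q^m]$ and match.

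The key computational step is to understand the substitution $E_m(qx,q)$ at the level of coefficients. First I would record that if $E_m(x,q) = \sum_{d,w} E_m[x^d q^w]\, x^d q^w$, then replacing $x$ by $qx$ sends the monomial $x^d q^w$ to $x^d q^{w+d}$; hence $E_m[x^d q^w] = (\text{coeff. of } x^d q^{w+d} \text{ in } E_m(qx,q))$, or equivalently the coefficient of $x^d q^m$ in $E_m(qx,q)$ equals $E_m[x^d q^{m-d}]$. This single bookkeeping fact, applied twice, handles both factors of the form $E_{\bullet}(qx,q)$ appearing in the recurrence. For the trailing term $E_{n-1}(qx,q)$ this immediately yields the contribution $E_{n-1}[x^d q^{m-d}]$, matching the last summand in the corollary.

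Next I would expand the product $E_i(x,q)\cdot E_{n-i-1}(qx,q)\cdot x$ and extract $[x^d q^m]$. Writing $E_i(x,q) = \sum E_i[x^a q^b] x^a q^b$ and using the substitution fact for the second factor, the extra factor of $x$ shifts the $x$-degree up by one, so collecting the monomial $x^d q^m$ forces $a = k-1$ for some index $k$ with $1 \le k \le i$ (matching the inner sum over $k$), while the $E_{n-i-1}$ factor supplies $x^{d-k}$. The $q$-exponents must add to $m$; after accounting for the degree-shift in the $q$-power coming from the $x \mapsto qx$ substitution in the $E_{n-i-1}$ factor, one is led to the convolution $\sum_{j=0}^m E_{n-i-1}[x^{d-k}q^{m-j}]\cdot E_i[x^{k-1}q^{k+j-d}]$. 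I expect the main obstacle to be getting the $q$-exponent bookkeeping exactly right: one must carefully verify that the inner factor's $q$-exponent is $k+j-d$ (and not an off-by-one variant), which requires simultaneously tracking the weight carried by $\pi_L$, the descent-count shift between $E^*_{i+1}$ and $E_i$ supplied by Lemma~\ref{lem:4.5}, and the $q^{d-k}$ factor that was folded into the $x \mapsto qx$ substitution. Once the two exponents are pinned down and the summation range $0 \le j \le m$ is justified (the nonnegativity of exponents truncating the convolution), assembling the three contributions gives exactly the stated formula.

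An alternative, and perhaps cleaner, route would be to return to the combinatorial proof of Lemma~\ref{lem:4.6} and refine it: rather than summing over all weights implicitly, I would additionally fix the total weight $w(\sigma) = m$ and re-run the Case~1 / Case~2 split. In Case~1, $\sigma = 1 \cdot \pi_1$ forces $w(\pi_1) = m - d$, giving $E_{n-1}[x^d q^{m-d}]$. In Case~2, the identity $w(\sigma) = w(\pi_L\cdot 1) + w(\pi_R) + \des(\pi_R)$ with $\des(\pi_R) = d-k$ splits the weight $m$ as $w(\pi_L\cdot 1) + w(\pi_R) = m - (d-k)$; introducing $j$ to range over the possible values of $w(\pi_R)$ (i.e. the weight assigned to the right factor) produces the convolution over $j$, and Lemma~\ref{lem:4.5} converts $E^*_{i+1}[x^k q^{\bullet}]$ into $E_i[x^{k-1} q^{\bullet}]$ with the weight preserved. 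I would present whichever derivation keeps the index shifts most transparent; the polynomial-extraction approach is shorter to write, while the combinatorial refinement makes the exponent $k+j-d$ conceptually inevitable.
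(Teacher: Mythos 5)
Your proposal is correct and is in substance the paper's own proof: the paper derives the corollary from Lemma \ref{lem:4.6} by ``introducing a third summation'' over the $q$-exponents, which is exactly the convolution over $j$ you describe, and your alternative route (refining the combinatorial proof of Lemma \ref{lem:4.6} by fixing the total weight $m$) is that same computation. Your primary route through Theorem \ref{thm:formula} differs only cosmetically---you undo the $x \mapsto qx$ substitution instead of starting from the fixed-$x$-degree identity of Lemma \ref{lem:4.6}---and the bookkeeping you flag does check out: matching $x$-degrees forces the factors $x^{k-1}$ and $x^{d-k}$, and the total $q$-exponent is $b + e + (d-k) = m$ (where $b,e$ are the pre-substitution exponents), so setting $e = m-j$ yields $b = k+j-d$ exactly as in the stated formula.
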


\begin{proof}
Since by Lemma \ref{lem:4.6}, we have $$E_n[x^d]=\sum_{i=1}^{n-1} \binom{n-1}{i} \sum_{k=1}^{i} E_{i}[x^{k-1}] \cdot E_{n-1-i}[x^{d-k}]q^{d-k}+E_{n-1}[x^d]q^d,$$ we can piece together $m$ in $E_n[x^dq^m]$ by introducing a third summation and iterating through all possibilities.
\end{proof}

\section{Alternate Proof of the Stabilization Phenomenon}
\label{sec:second proof}
Using our recursive formula for the coefficients of $E_n(x,q)$ in Corollary \ref{cor:4.9}, we determine a lower bound for the exponent of $q$ such that the coefficients $E_n[x^dq^w]$ are stabilized in Theorem \ref{thm:5.1}. We then use this result to give a second proof of Theorem \ref{thm:stabilization}.

\begin{theorem} \label{thm:5.1}
If $m > (d-1)(n-d-1)$ for $0 < d < n-1$, then $$E_n[x^dq^m] = E_{n-1}[x^dq^{m-d}].$$
\end{theorem}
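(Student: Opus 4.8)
The plan is to read the result straight off the coefficient recurrence in Corollary \ref{cor:4.9}. In that identity the trailing term is already $E_{n-1}[x^dq^{m-d}]$, which is exactly the right-hand side we want, so it suffices to show that under the hypothesis $m \geq (d-1)(n-d-1)+1$ the entire triple sum
$$\sum_{i=1}^{n-1} \binom{n-1}{i} \sum_{k=1}^{i} \sum_{j=0}^{m} E_{n-i-1}[x^{d-k}q^{m-j}] \cdot E_{i}[x^{k-1}q^{k+j-d}]$$
vanishes term by term. The advantage of this route is that it is genuinely independent of the disparity bound of Theorem \ref{thm:disparity} used in Section \ref{sec:stabilization}, so it really does furnish an alternate proof.

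First I would note that a summand is nonzero only when both factors are. The factor $E_{n-i-1}[x^{d-k}q^{m-j}]$ counts permutations of length $n-i-1$ with $d-k$ descents and weight $m-j$, so its nonvanishing forces $m-j \leq \maxwt(n-i-1,d-k)$; likewise $E_i[x^{k-1}q^{k+j-d}]\neq 0$ forces $k+j-d \leq \maxwt(i,k-1)$. Adding these two inequalities cancels the auxiliary index $j$ and yields
$$m \leq \maxwt(n-i-1,d-k) + \maxwt(i,k-1) + (d-k).$$

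The core of the argument is then a purely algebraic inequality. Using the closed form $\maxwt(a,b)=b(a-b-1)$, I would simplify the right-hand side to $(d-k)(n-d-1) + (i-k)(2k-d-1)$ and show it never exceeds $(d-1)(n-d-1)$ over the admissible range $1 \leq k \leq \min(d,i)$, $k \leq i \leq n-1$; equivalently this reduces to $(i-k)(2k-d-1) \leq (k-1)(n-d-1)$. When $2k-d-1 \leq 0$ the left side is nonpositive and the inequality is immediate. When $2k-d-1 > 0$ I would invoke the descent bound $d-k \leq (n-i-1)-1$ to get $i-k \leq n-d-2$ and then verify the resulting linear inequality, where the exclusion $d \neq n-1$ guarantees $n-d-1 \geq 1$. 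Since $m \leq (d-1)(n-d-1)$ contradicts the hypothesis, every summand is zero and the identity follows.

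I expect the real obstacle to be bookkeeping at the edge of the summation range rather than the central inequality. In particular the case $i=n-1$ makes the first factor $E_0[x^{d-k}q^{m-j}]$, whose only nonzero monomial is $x^0q^0$, so the generic descent bound degenerates; here I would argue by hand that nonvanishing forces $k=d$ and $j=m$, reducing the second factor to $E_{n-1}[x^{d-1}q^{m}]$, which is zero by the hypothesis because $\maxwt(n-1,d-1)=(d-1)(n-d-1)<m$. It is worth recording that the simplified expression attains its maximum $(d-1)(n-d-1)$ exactly at $i=k=1$, which shows the threshold in the statement is sharp; the exclusion $d \neq 0$ merely keeps the $\maxwt$ formula and the descent counts meaningful.
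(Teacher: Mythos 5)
Your proposal is correct and takes essentially the same route as the paper: both read the identity off Corollary \ref{cor:4.9} and show that the triple sum vanishes by combining the two $\maxwt$ inequalities for the factors with the summation and descent bounds. In fact, your write-up supplies in full the step the paper's proof dismisses as ``a straightforward computation'' --- namely the reduction to $(i-k)(2k-d-1)\le (k-1)(n-d-1)$, the case analysis on the sign of $2k-d-1$, and the $i=n-1$ edge case where the first factor degenerates to $E_0$.
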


\begin{proof}
By Corollary \ref{cor:4.9}, we have
\begin{equation}\label{eq:4}
    \begin{split}
        E_n[x^d&q^{m}] = E_{n-1}[x^dq^{m-d}] + \\
        & \sum_{i=1}^{n-1} \binom{n-1}{i} \Big ( \sum_{k=1}^{i} \Big (\sum_{j=0}^{m} E_{n-i-1}[x^{d-k}q^{m-j}] \cdot E_i[x^{k-1}q^{k+j-d}] \Big ) \Big ).
    \end{split}
\end{equation}
For fixed $i$ and $k$, we show that for all $j$, we have $E_{n-i-1}[x^{d-k}q^{m-j}] \cdot E_i[x^{k-1}q^{k+j-d}] = 0$. We consider the following two cases:

\emph{Case 1.} Suppose $m=(d-1)(n-d-1)+u$ for some $u \in \mathbb{N}$. We show that $E_n[x^dq^m] = E_{n-1}[x^dq^{m-d}]$. Note that $m-d \geq 0$ since $0<d<n-1$. By Corollary \ref{cor:4.9}, we have
\begin{equation} \label{eq:4}
    \begin{split}
    &E_n[x^dq^{(d-1)(n-d-1)+u}] = E_{n-1}[x^dq^{(d-1)(n-d-1)+u-d}] + \\
    & \sum_{i=1}^{n-1} \binom{n-1}{i} \Big ( \sum_{k=1}^{i} \Big (\sum_{j=0}^{(d-1)(n-d-1)+u} E_{n-i-1}[x^{d-k}q^{(d-1)(n-d-1)+u-j}]\\
    & \qquad \qquad \qquad \qquad \qquad \qquad \qquad \qquad \qquad \qquad \cdot E_i[x^{k-1}q^{k+j-d}] \Big ) \Big ).
    \end{split}
\end{equation}
The weight of a permutation of length $n$ with $d$ descents is bounded by $\maxwt(n,d)=d(n-d-1)$. So from the coefficients $E_i[x^{k-1}q^{k+j-d}]$ and $E_{n-i-1}[x^{d-k}q^{(d-1)(n-d-1)+u-j}]$, we have the following inequalities:
\begin{equation} \label{eq: ie3} 
(d-1)(n-d-1)+1+u-j \leq (d-k)(n-i-1-d+k-1)
\end{equation}
\begin{equation} \label{eq: ie4} 
k+j-d \leq (k-1)(i-k). 
\end{equation}
Analyzing the terms and summation bounds in \eqref{eq:4} gives $1 \leq k \leq d$, $k \leq i$ and $n-i-1 \geq d-k-1$. A straightforward computation shows that these summation bounds and \eqref{eq: ie3} and \eqref{eq: ie4} contradict each other. Thus for all $j$, we have $E_{n-i-1}[x^{d-k}q^{m-j}] \cdot E_i[x^{k-1}q^{k+j-d}] = 0$ and
\begin{equation*}
    \begin{split}
        \sum_{i=1}^{n-1} \binom{n-1}{i} \Big ( \sum_{k=1}^{i} \Big (\sum_{j=0}^{(d-1)(n-d-1)+u} &E_{n-i-1}[x^{d-k}q^{(d-1)(n-d-1)+u-j}]\\
        &\cdot E_i[x^{k-1}q^{k+j-d}] \Big ) \Big )=0.
    \end{split}
\end{equation*}
So $E_n[x^dq^m] = E_{n-1}[x^dq^{m-d}]$.

\emph{Case 2.} Suppose $m = (d-1)(n-d-1)-v$ for some integer $v \geq 0$. Using the same reasoning as in Case 1, we have the following inequalities:
\begin{equation} \label{eq: ie7} 
(d-1)(n-d-1)-j-v \leq (d-k)(n-1-i-d+k-1)
\end{equation}
\begin{equation} \label{eq: ie8} 
k+j-d \leq (k-1)(i-k)
\end{equation}
A straightforward computation shows that there exist $j$ that satisfy both \eqref{eq: ie7} and \eqref{eq: ie8}. Thus we have 
\begin{equation*}
    \begin{split}
        \sum_{i=1}^{n-1} \binom{n-1}{i} \Big ( \sum_{k=1}^{i} \Big (\sum_{j=0}^{(d-1)(n-d-1)-v} &E_{n-i-1}[x^{d-k}q^{(d-1)(n-d-1)-v-j}]\\
        &\cdot E_i[x^{k-1}q^{k+j-d}] \Big ) \Big ) \neq 0.
    \end{split}
\end{equation*}
So $E_n[x^dq^m] \neq E_{n-1}[x^dq^{m-d}]$.
\end{proof}

Using Theorem \ref{thm:5.1}, we give an alternate proof of Theorem \ref{thm:stabilization}.

\begin{proof}[Proof of Theorem \ref{thm:stabilization}]
By Theorem \ref{thm:5.1}, we have $E_n[x^dq^m] = E_{n-1}[x^dq^{m-d}]$ if $m > (d-1)(n-d-1)$, where $0 < d < n-1$. Furthermore, $E_{n}[x^dq^m]$ is stabilized when $m > (d-1)(n-d-1)$ and is not stabilized when $m \leq (d-1)(n-d-1)$. It is easy to show that $E_n[x^{0}q^{0}]$ is stabilized and $E_n[x^{n-1}q^{0}]$ is not.
\end{proof}

\section{Connection Between $W_d(t)$ and Integer Partitions}
\label{sec: OEIS}

Theorem \ref{thm:formula} gives a recursive formula for the $q$-Eulerian polynomials $E_n(x,q)$, and Theorem \ref{thm:5.1} states the condition for which the coefficients $E_n[x^dq^w]$ are stabilized. We conclude with a conjecture regarding the coefficients of $W_d(t)$, which are the stabilized coefficients of $E_n(x,q)$.

It was observed in \cite{dugan2019tiered} that the coefficients of $W_d(t)$ correspond to numbers in the sequence {\fontfamily{cmtt}\selectfont
A256193} in the Online Encyclopedia of Integer Sequences \cite{sloane2003line}. The numbers $T(n,k)$ count partitions of $n$ with exactly $k$ parts of a second type, denoted by a prime $^{\prime}$. For example, we have \\

\qquad \quad $T(3,0) = 3$, corresponding to 111, 21, 3,

\qquad \quad $T(3,1) = 6$, corresponding to $1^{\prime}11$, $11^{\prime}1$, $111^{\prime}$, $2^{\prime}1$, $21^{\prime}$, $3^{\prime}$,

\qquad \quad $T(3,2) = 4$, corresponding to $1^{\prime}1^{\prime}1$, $1^{\prime}11^{\prime}$, $11^{\prime}1^{\prime}$, $2^{\prime}1^{\prime}$,

\qquad \quad $T(3,3) = 1$, corresponding to $1^{\prime}1^{\prime}1^{\prime}$.
\\

Table \ref{tab:2} is a short table of $T(n,k)$ with $k$ constant along the columns. Numbers in bold correspond to coefficients of the power series $W_d(t)$.

\begin{table}[htb]
\begin{tabular}{llllllllll}
\textbf{1} &            &             &             &              &              &              &             &             &            \\
1          & \textbf{1} &             &             &              &              &              &             &             &            \\
2          & \textbf{3} & \textbf{1}  &             &              &              &              &             &             &            \\
3          & 6          & \textbf{4}  & \textbf{1}  &              &              &              &             &             &            \\
5          & 12         & \textbf{11} & \textbf{5}  & \textbf{1}   &              &              &             &             &            \\
7          & 20         & 24          & \textbf{16} & \textbf{6}   & \textbf{1}   &              &             &             &            \\
11         & 35         & 49          & \textbf{41} & \textbf{22}  & \textbf{7}   & \textbf{1}   &             &             &            \\
15         & 54         & 89          & 91          & \textbf{63}  & \textbf{29}  & \textbf{8}   & \textbf{1}  &             &            \\
22         & 86         & 158         & 186         & \textbf{155} & \textbf{92}  & \textbf{37}  & \textbf{9}  & \textbf{1}  &            \\
30         & 128        & 262         & 351         & 342          & \textbf{247} & \textbf{129} & \textbf{46} & \textbf{10} & \textbf{1}
\end{tabular}
\vspace{+.2cm}
\caption{The numbers $T(n,k)$} \label{tab:2}
\vspace{-.4cm}
\end{table}

We will give a recursive formula for $T(d+k,d)$, but we first need the following lemma.

\begin{lemma} \label{lem:6.1}
Let $n, k, b \in \mathbb{N}$ such that $b \leq 2k-n$. The numbers $T(n,k)$ satisfy the relation $$T(n,k)=\sum_{j=0}^{b} \binom{b}{j} \cdot T(n-b, k-j).$$
\end{lemma}

\begin{proof}
Let $n, k, b \in \mathbb{N}$ such that $b \leq 2k-n$. We first show that $T(n,k) \geq \sum_{j=0}^{b} \binom{b}{j} \cdot T(n-b, k-j)$. We then show that $T(n,k) \leq \sum_{j=0}^{b} \binom{b}{j} \cdot T(n-b, k-j)$.

To each partition counted by $T(n-b,k-j)$, we append all possible partitions consisting of $b-j$ 1s and $j$ $1^{\prime}$s and obtain partitions of $n$ with $k$ parts of a second type. For fixed $j$, there are $\binom{b}{j} \cdot T(n-b, k-j)$ such partitions. Summing over all possible values of $j$, there are $\sum_{j=0}^{b} \binom{b}{j} \cdot T(n-b, k-j)$ such partitions formed in this way. Thus $T(n,k) \geq \sum_{j=0}^{b} \binom{b}{j} \cdot T(n-b, k-j)$.

We now show that this summation counts all partitions which are counted by $T(n,k)$. Since $n < 2k$, each partition represented by $T(n,k)$ contains at least one singleton part (1 or $1^{\prime}$). In particular, the partition with the fewest singleton parts is the partition $\underbrace{2^{\prime}2^{\prime} \dots 2^{\prime}}_{n-k} \underbrace{1^{\prime}1^{\prime} \dots 1^{\prime}}_{2k-n}$. So every partition counted by $T(n,k)$ ends in at least $2k-n$ singleton parts. This means that for $b < 2k-n$, truncating the last $b$ elements from a partition which has $b-j$ 1s and $j$ $1^{\prime}$s results in a partition counted by $T(n-b,k-j)$. As there can be at most $\binom{b}{j}$ such partitions, we have $T(n,k) = \sum_{j=0}^{b} \binom{b}{j} \cdot T(n-b, k-j)$.
\end{proof}

\begin{theorem} \label{thm:w_d(t)}
For fixed $k \in \mathbb{N}$ and $d \geq 2k$, we have $$T(d+k, d) = \sum_{i=1}^{k} \Big ((-1)^{i+1} \cdot \binom{k}{i} \cdot T(d+k-i, d-i)\Big) + 1$$.
\end{theorem}

\begin{proof}
Let $k \in \mathbb{N}$ be fixed and $d \geq 2k$. By Lemma \ref{lem:6.1}, we get $$T(d+k,d) = \sum_{j=1}^{k} \binom{k}{j} \cdot T(d,d-j) + 1.$$
Note that $\binom{k}{j} = \sum_{i=1}^{j} \binom{k}{j} \binom{j}{i} (-1)^{i+1}$. We have $$T(d+k,d) = \sum_{j=1}^{k} T(d,d-j) \Big ( \sum_{i=1}^{j} \binom{k}{j} \binom{j}{i} (-1)^{i+1} \Big ) + 1.$$
By the identity $\binom{r}{s} \binom{s}{t} = \binom{r}{t} \binom{r-t}{s-t}$, we get $$T(d+k,d) = \sum_{j=1}^{k} T(d,d-j) \Big (\sum_{i=1}^{j} \binom{k}{i} \binom{k-i}{j-i} (-1)^{i+1} \Big ) + 1.$$
If we change the order of summations and let $a = j-i$, we obtain $$T(d+k,d)= \sum_{i=1}^{k} \binom{k}{i} (-1)^{i+1} \Big (\sum_{a=0}^{k-i} \binom{k-i}{a} \cdot T(d,d-a-i) \Big ) + 1.$$
Since $k-i \leq 2(d-i)-(d+k-i)$, by Lemma \ref{lem:6.1} we have $$T(d+k,d) = \sum_{i=1}^{k} \Big ( (-1)^{i+1} \binom{k}{i} \cdot T(d+k-i,d-i) \Big ) + 1. \eqno\qedhere$$
\end{proof}

\begin{conjecture} \label{conj:6.1}
For fixed $k \in \mathbb{N}$ and $d \geq 2k$, we have $$W_d[t^{k}] = \sum_{i=1}^{k} \Big((-1)^{i+1} \cdot \binom{k}{i} \cdot W_{d-i}[t^{k}] \Big) + 1.$$
\end{conjecture}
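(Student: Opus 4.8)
The plan is to recast the identity in a symmetric form and then reduce it to a single alternating binomial sum evaluable by finite differences. Writing $(-1)^{i+1} = -(-1)^i$ and moving the isolated $+1$ together with the $i=0$ term $T(d+k,d)$ to the left, the claim is equivalent to
\[
\sum_{i=0}^{k} (-1)^i \binom{k}{i}\, T(d+k-i,\, d-i) = 1, \qquad d \ge 2k.
\]
The first step is to record the combinatorial meaning of $T(n,m)$ in a computationally convenient form. Since a partition of $n$ contributes to $T(n,m)$ according to the number of ways to choose which $m$ of its parts carry the prime, the examples of $T(3,k)$ above generalize to
\[
T(n,m) = \sum_{\lambda \vdash n} \binom{r(\lambda)}{m},
\]
where $r(\lambda)$ denotes the number of parts of $\lambda$.

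Next I would produce an explicit closed form for the particular values appearing in the sum. Setting $N = d+k-i$, each term is $T(N, N-k)$, and only partitions of $N$ with between $N-k$ and $N$ parts contribute. I would group by $j$, where the number of parts is $N-j$: subtracting $1$ from every part sends a partition of $N$ into exactly $N-j$ parts to a partition of $j$ into at most $N-j$ parts. Since any partition of $j$ has at most $j \le k$ parts and $N \ge d \ge 2k \ge 2j$ forces $j \le N-j$, this map is a bijection onto all partitions of $j$. Hence the number of partitions of $N$ into $N-j$ parts equals $p(j)$, and using $\binom{N-j}{N-k} = \binom{N-j}{k-j}$,
\[
T(N, N-k) = \sum_{j=0}^{k} p(j)\binom{N-j}{k-j}, \qquad N \ge 2k.
\]

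Substituting this into the alternating sum and interchanging the order of summation, the identity becomes
\[
\sum_{j=0}^{k} p(j) \left( \sum_{i=0}^{k} (-1)^i \binom{k}{i} \binom{(d-j)+(k-i)}{k-j} \right) = 1.
\]
For fixed $j$ the inner sum is a signed $k$-th finite difference (in $i$) of the polynomial $i \mapsto \binom{(d-j)+(k-i)}{k-j}$, which has degree $k-j$ in $i$. The $k$-th finite difference of a polynomial of degree strictly less than $k$ vanishes, so every term with $j \ge 1$ contributes $0$. The only surviving term is $j=0$, where the polynomial has degree exactly $k$ with leading coefficient $(-1)^k/k!$; a direct evaluation then gives inner sum equal to $1$, and multiplying by $p(0)=1$ yields the right-hand side $1$.

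The main obstacle is getting the range conditions exactly right in the second step: the closed form for $T(N,N-k)$ rests on the bijection between partitions of $N$ into $N-j$ parts and unrestricted partitions of $j$, which holds only once $N \ge 2j$ for every $j$ with $0 \le j \le k$. Tracking the smallest value $N=d$ (attained at $i=k$) shows this is precisely $d \ge 2k$, matching the hypothesis exactly; once this is secured, the finite-difference evaluation is routine.
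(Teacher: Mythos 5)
Your argument has a genuine gap at its very first step: you replace $W_{d-i}[t^k]$ by $T(d+k-i,\,d-i)$ and from then on prove an identity purely about the partition numbers $T(n,k)$. But the identification $W_d[t^k]=T(d+k,d)$ is not a definition and is not proven anywhere; it is only an empirical observation (going back to Dugan--Glennon--Gunnells--Steingr\'\i msson) that the stabilized coefficients of $E_n(x,q)$ appear to match entries of OEIS sequence A256193. This is precisely why the statement about $W_d$ is left as Conjecture \ref{conj:6.1} in the paper, while the corresponding statement about $T$, equivalent to $\sum_{i=0}^{k}(-1)^i\binom{k}{i}T(d+k-i,d-i)=1$ for $d\ge 2k$, is stated and proved separately as Theorem \ref{thm:w_d(t)}. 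By assuming the correspondence you assume away the only open part of the problem: what you have actually produced is a proof of Theorem \ref{thm:w_d(t)}, not of the conjecture. The correspondence also cannot be waved in as if it were routine, since it fails outside a restricted range (for instance $W_2[t^3]=31$ while $T(5,2)=24$); an honest proof would first have to establish $W_{d-i}[t^k]=T(d+k-i,d-i)$ for the relevant indices, which requires a combinatorial bridge between permutation weights and two-type partitions that nobody has constructed.

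Setting that aside, the portion of your argument that concerns $T$ alone is correct, and it is a genuinely different (and arguably cleaner) route to Theorem \ref{thm:w_d(t)} than the paper's. The paper goes through Lemma \ref{lem:6.1}, which strips $b$ trailing parts equal to $1$ or $1'$ from a partition, and then finishes with a chain of binomial-coefficient identities. You instead derive the closed form $T(N,N-k)=\sum_{j=0}^{k} p(j)\binom{N-j}{k-j}$ for $N\ge 2k$ (via the standard bijection subtracting $1$ from every part, with the hypothesis $d\ge 2k$ entering exactly where you say it does), and then finish in one stroke: the alternating sum over $i$ is a $k$-th finite difference, which annihilates the degree-$(k-j)$ polynomial terms for $j\ge 1$ and evaluates to $1$ at $j=0$. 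If you supplied the missing bridge between $W_d$ and $T$, this would be a nice self-contained argument; as it stands, it resolves only the half of the problem that the paper already resolves, and the conjecture remains open.
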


\section*{Acknowledgements}
We give special thanks to Professor Einar Steingr\'\i msson for his support and invaluable advice on this paper. We are grateful to Roger Van Peski for his dedicated mentoring and copious feedback. We thank Professor Paul E. Gunnells for proposing this research topic and for his guidance. Finally, we thank the PROMYS program and the Clay Mathematics Institute, under the auspices of which we began this research.

\newpage
\bibliography{references}
\bibliographystyle{abbrv}

$\\ \\ \\$
\end{document}